\newtheorem{theorem}{Theorem}
\newtheorem{proposition}[theorem]{Proposition}
\newtheorem{corollary}[theorem]{Corollary}
\newtheorem{conjecture}[theorem]{Conjecture}
\theoremstyle{remark}
\newtheorem{definition}[theorem]{Definition}
\newtheorem{remark}[theorem]{Remark}
\newtheorem{example}[theorem]{Example}
\begin{document}

\title{On the optimal arrangement of $2d$ lines in $\mathbb{C}^d$}
\author{Kean~Fallon
\qquad
Joseph~W.~Iverson}
\address{Department of Mathematics, Iowa State University, Ames, IA}

\email{keanpf@iastate.edu}
\email{jwi@iastate.edu}


\maketitle

\begin{abstract}
We show the optimal coherence of $2d$ lines in $\mathbb{C}^{d}$ is given by the Welch bound whenever a skew Hadamard of order~$d+1$ exists.
Our proof uses a variant of Hadamard doubling that converts any equiangular tight frame of size $\tfrac{d-1}{2} \times d$ into another one of size $d \times 2d$.
Among $d < 150$, this produces equiangular tight frames of new sizes when $d = 11$, $35$, $39$, $43$, $47$, $59$, $67$, $71$, $83$, $95$, $103$, $107$, $111$, $119$, $123$, $127$, $131$, and $143$.
\end{abstract}


\section{Introduction}

A \textit{projective code} is an arrangement of $n$ lines (1-dimensional subspaces) in $\mathbb{C}^d$.
An \textit{optimal} projective code makes the least pairwise distance (defined below) as big as possible.
This is a natural geometric problem that arises in applications such as wireless communication~\cite{LHS:03}, multiple description coding~\cite{SH:03}, and compressed sensing~\cite{T:08}.
In this short article, we explain how to double an optimal code of $n \approx 2d$ lines in $\mathbb{C}^d$ to produce another optimal code of $2n$ lines in $\mathbb{C}^n$.
Applying this technique, we construct an optimal arrangement of $2d$ lines in $\mathbb{C}^d$ whenever there is a skew Hadamard of order $d+1$.

By definition, the distance between lines spanned by unit column vectors $f_1,f_2 \in \mathbb{C}^d$ is $\tfrac{1}{\sqrt{2}}\| f_1 f_1^* - f_2 f_2^* \|_F = \sqrt{1 - |f_1^* f_2|^2}$.
Intuitively, the lines form an angle with cosine $|f_1^* f_2|$, which is big when the distance is small.
For $n \geq 2$ lines spanned by unit vectors $f_1,\dotsc,f_n \in \mathbb{C}^d$, the \textbf{coherence} $\max_{i \neq j} | f_i^* f_j|$ gives the cosine of the sharpest angle.
It is small when the least pairwise distance is big.

Several lower bounds on coherence are known~\cite{JKM:19}, but for $\tfrac{n}{d} \approx 2$ the largest is the \textbf{Welch bound}~\cite{W:74}, also known as the \textit{first Rankin bound}~\cite{CHS:96,R:55}, the \textit{simplex bound}~\cite{CHS:96}, the \textit{first Levenstein bound}~\cite{L:82}, or (in the context of equiangular lines) the \textit{relative bound}~\cite{LS:73}.

\begin{proposition}[Welch bound]
Given $n>d$ lines in $\mathbb{C}^d$, arrange unit-norm vector representatives into columns of a matrix
\[
F = \left[ \begin{array}{ccc} f_1 & \cdots & f_n \end{array} \right] \in \mathbb{C}^{d \times n}.
\]
Then the lines spanned by $f_1,\dotsc,f_n$ have coherence at least
\begin{equation}
\label{eq:mu}
\mu = \sqrt{ \frac{ n - d }{d (n-1) } }.
\end{equation}
Equality is achieved if and only if:
	\begin{itemize}
	\item[(i)]
	$| f_i^* f_j | = \mu$ whenever $1 \leq i \neq j \leq n$, and
	\smallskip
	\item[(ii)]
	$FF^* = \tfrac{n}{d} I$.
	\end{itemize}
\end{proposition}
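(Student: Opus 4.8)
The plan is to pass to the Gram matrix $G = F^* F \in \mathbb{C}^{n \times n}$, whose diagonal entries are $G_{ii} = \|f_i\|^2 = 1$ and whose off-diagonal entries are $G_{ij} = f_i^* f_j$. Two trace computations drive everything. First, since the diagonal entries are all $1$, we have $\operatorname{tr}(G) = n$. Second, because $G$ is Hermitian,
\[
\operatorname{tr}(G^2) = \|G\|_F^2 = \sum_{i,j} |G_{ij}|^2 = n + \sum_{i \neq j} |f_i^* f_j|^2 .
\]
The idea is to bound $\operatorname{tr}(G^2)$ from below using only the rank of $G$, and then to bound $\sum_{i \neq j}|f_i^* f_j|^2$ from above using only the coherence; squeezing these two estimates together will produce \eqref{eq:mu}.

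For the lower bound, I would use that $G = F^* F$ with $F \in \mathbb{C}^{d \times n}$ is positive semidefinite of rank $r \leq d$. Letting $\lambda_1, \dotsc, \lambda_r$ denote its nonzero eigenvalues, Cauchy--Schwarz applied to $(\lambda_1,\dotsc,\lambda_r)$ against $(1,\dotsc,1)$ gives
\[
n^2 = (\operatorname{tr} G)^2 = \Bigl( \sum_{k=1}^r \lambda_k \Bigr)^2 \leq r \sum_{k=1}^r \lambda_k^2 \leq d\,\operatorname{tr}(G^2) .
\]
Combining this with the second trace identity yields $\sum_{i \neq j}|f_i^* f_j|^2 \geq \tfrac{n(n-d)}{d}$. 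Since there are exactly $n(n-1)$ off-diagonal terms, each at most the squared coherence, I obtain $n(n-1)\max_{i\neq j}|f_i^* f_j|^2 \geq \tfrac{n(n-d)}{d}$, which rearranges to $\max_{i\neq j}|f_i^* f_j| \geq \mu$.

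The main obstacle will be the equality characterization, since it requires tracking tightness in both estimates and converting a spectral statement back into a matrix statement. Equality in the Welch bound forces both inequalities above to be equalities. Tightness in the Cauchy--Schwarz step demands $r = d$ with $\lambda_1 = \dotsb = \lambda_d$; since the eigenvalues sum to $n$, each equals $\tfrac{n}{d}$. As $FF^*$ and $F^* F$ share the same nonzero spectrum and $FF^*$ is $d \times d$ Hermitian, having all $d$ eigenvalues equal to $\tfrac{n}{d}$ is equivalent to $FF^* = \tfrac{n}{d} I$, which is (ii). Tightness in the averaging step demands that every off-diagonal entry attain the maximum, i.e. $|f_i^* f_j| = \mu$ for all $i \neq j$, which is (i). Conversely, (i) immediately pins the coherence to $\mu$ and drives $\sum_{i\neq j}|f_i^* f_j|^2$ to its minimal value $\tfrac{n(n-d)}{d}$, forcing (ii); so (i) and (ii) together certify equality. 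The one place to be careful is precisely this last shared-spectrum identification, making sure the $n \times n$ condition on $G$ is correctly transported to the $d \times d$ condition on $FF^*$.
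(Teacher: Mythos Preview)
The paper does not actually prove this proposition; it is stated as background with citations to Welch~\cite{W:74} and others, and no argument is supplied. So there is no ``paper's own proof'' to compare against.

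That said, your argument is the standard one and is correct. The two-step squeeze---Cauchy--Schwarz on the nonzero eigenvalues of $G=F^*F$ to bound $\operatorname{tr}(G^2)$ from below, then averaging the off-diagonal entries to bound it from above---is exactly how the Welch bound is usually derived, and your equality analysis is sound: tightness in the spectral step forces $r=d$ with all nonzero eigenvalues equal to $n/d$, and transporting this to $FF^*$ via the shared nonzero spectrum gives (ii); tightness in the averaging step gives (i). Your observation that (i) alone already forces (ii) (since (i) pins $\operatorname{tr}(G^2)$ to $n^2/d$, collapsing the Cauchy--Schwarz chain) is correct and slightly stronger than what the statement asks, but it does no harm.
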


If the Welch bound is achieved, then $F$ is known as an \textbf{equiangular tight frame} (ETF) of size $d \times n$, by virtue of~(i) and~(ii).
ETFs are also known as \textit{Welch bound equality codebooks}.
When an ETF exists, its columns span lines with optimally small coherence.
A variety of ETF constructions are known, many based on combinatorial designs and/or representation theory, as summarized in~\cite{FM:15}.
Involved in several constructions, a \textbf{Hadamard} of order $m$ is an $m\times m$ matrix $H$ with entries in $\{1,-1\}$ that satisfies $H^\top H = mI$.
A \textbf{skew Hadamard} is a Hadamard $H=C+I$ for which $C^\top = -C$.
Then $C^\top C = (m-1)I$, and so $C$ is a skew-symmetric \textbf{conference matrix}.
Our main result creates new ETFs from skew Hadamards.

\begin{theorem}[Main result]
\label{thm:main}
Suppose there is a skew Hadamard of order $d+1$.
Then there is an ETF of size $d \times 2d$.
Thus, the Welch bound gives the optimal coherence of $2d$ lines in $\mathbb{C}^d$.
\end{theorem}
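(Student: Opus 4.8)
The plan is to reduce the optimality statement to the construction of an ETF of size $d \times 2d$: once such an $F$ exists, its columns realize equality in the Welch bound through conditions (i) and (ii) of the Proposition, so they span $2d$ lines of optimally small coherence. Thus everything rests on building the ETF. First I would convert the hypothesis into a usable algebraic object. Writing the skew Hadamard of order $d+1$ as $I + C$ with $C^\top = -C$, one gets $C^2 = -dI$. After a diagonal sign change I may assume $C = \left[\begin{smallmatrix} 0 & \mathbf 1^\top \\ -\mathbf 1 & C'\end{smallmatrix}\right]$, and then $C^2 = -dI$ forces the order-$d$ skew-symmetric matrix $C'$ to satisfy $C'\mathbf 1 = 0$ and $C'^2 = J - dI$, where $J = \mathbf 1\mathbf 1^\top$. (Note $d \equiv 3 \pmod 4$, so $d$ is odd and $\tfrac{d-1}{2}$ is an integer.) The Hermitian matrix $iC'$ then has eigenvalue $0$ on $\operatorname{span}(\mathbf 1)$ and eigenvalues $\pm\sqrt d$ on $\mathbf 1^\perp$, and $\tfrac12\bigl(I - \tfrac1d J + \tfrac{1}{\sqrt d}\, iC'\bigr)$ is a rank-$\tfrac{d-1}{2}$ projection with constant diagonal and constant off-diagonal modulus, i.e.\ the Gram matrix of the seed ETF of size $\tfrac{d-1}{2}\times d$.

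For the doubling I would realize the target through its Gram matrix. A $d \times 2d$ ETF has coherence $\mu = (2d-1)^{-1/2}$, and its Gram matrix must take the form $\mathcal G = I_{2d} + (2d-1)^{-1/2} Q$ for a Hermitian $Q$ of order $2d$ with zero diagonal, unimodular off-diagonal entries, and $Q^2 = (2d-1)I$ (this last identity is exactly the condition that $\mathcal G$ be a positive scalar multiple of a rank-$d$ projection). I would build such a $Q$ in block form,
\[
Q = \begin{pmatrix} iC' & M \\ M^* & -iC' \end{pmatrix},
\]
so that the diagonal blocks reproduce a rescaling of the seed and its sign-flip, glued by an order-$d$ matrix $M$ all of whose entries are unimodular. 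Expanding $Q^2$ and substituting $C'^2 = J - dI$, the requirement $Q^2 = (2d-1)I$ reduces to the two conditions $MM^* = M^*M = (d-1)I + J$ and $MC' = C'M$; the cross blocks then cancel automatically.

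The crux is producing an $M$ with unimodular entries that meets both conditions at once, and this is where the skew-Hadamard identities must conspire. I would look inside the commutative algebra spanned by $I$, $C'$, and $J$ (every element of which commutes with $C'$, since $C'J = JC' = 0$), writing $M = \alpha I + \beta C' + \gamma J$. Using $C'^2 = J - dI$ and $J^2 = dJ$, the product $MM^*$ is again of this shape, and matching it to $(d-1)I + J$ while forcing the diagonal entry $\alpha + \gamma$ and the off-diagonal entries $\gamma \pm \beta$ to be unimodular yields a small scalar system whose solution is $\alpha = \beta = \sqrt{\tfrac{d-1}{d+1}}$ and $\gamma = i\sqrt{\tfrac{2}{d+1}}$. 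Granting this $M$, the matrix $Q$ is a bona fide complex conference matrix of order $2d$: Hermitian with $Q^2 = (2d-1)I$ and zero trace, so its eigenvalues are $\pm\sqrt{2d-1}$, each of multiplicity $d$; hence $\mathcal G$ has unit diagonal, constant off-diagonal modulus $(2d-1)^{-1/2}$, and rank exactly $d$. Thus $\mathcal G$ is the Gram matrix of an ETF of size $d \times 2d$, and the final sentence of the theorem follows from the Proposition. The step I expect to be genuinely delicate, rather than routine bookkeeping, is precisely this simultaneous unimodularity-plus-product constraint on $M$, since it is what singles out the skew case and makes the two diagonal blocks fit together.
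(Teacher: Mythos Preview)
Your argument is correct. Once you verify (as you do) that $M=\alpha I+\beta C'+\gamma J$ with $\alpha=\beta=\sqrt{\tfrac{d-1}{d+1}}$ and $\gamma=i\sqrt{\tfrac{2}{d+1}}$ has all entries unimodular, commutes with $C'$, and satisfies $MM^*=(d-1)I+J$, the block matrix $Q$ is Hermitian with zero diagonal, unimodular off-diagonal, and $Q^2=(2d-1)I$; by the signature-matrix criterion this is exactly the signature matrix of a $d\times 2d$ ETF.

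Your route, however, is genuinely different from the paper's. The paper proceeds in two modular steps: first it identifies the signature matrix $S=\alpha A+\overline{\alpha}A^\top$ (entries $\alpha,\overline{\alpha}$ with $\alpha=-\tfrac{1}{\sqrt{d+1}}+i\sqrt{1-\tfrac{1}{d+1}}$) of the seed $\tfrac{d-1}{2}\times d$ ETF, and then it applies a \emph{general} doubling theorem, valid for any ETF with $|c|\le 1$, producing $\Sigma=\left[\begin{smallmatrix}S & S+\beta I\\ S+\overline{\beta}I & -S\end{smallmatrix}\right]$. You instead bypass the seed's signature matrix entirely and place $\pm iC'$ (entries $\pm i$, which is \emph{not} itself the signature matrix of any ETF here since $(iC')^2=dI-J$ is not of the form $c(iC')+(d-1)I$) on the diagonal, then solve directly inside the commutative algebra $\operatorname{span}\{I,C',J\}$ for an admissible $M$. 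What the paper buys is modularity: its doubling theorem applies to any $\tfrac{d-1}{2}\times d$ or $\tfrac{d}{2}\times d$ ETF, not only those coming from skew Hadamards. What your approach buys is directness: the construction is one self-contained computation using only $C'^2=J-dI$ and $C'\mathbf 1=0$, with no need to invoke the seed ETF or a separate doubling lemma. One small expository point: your first paragraph on the seed ETF is motivational but not actually used---the diagonal blocks $\pm iC'$ are not ``a rescaling of the seed'' in any literal sense, so that phrase is a bit loose, though it does no harm to the proof.
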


An explicit construction appears in Corollary~\ref{cor:main const}.
By way of comparison, the following is well known.

\begin{proposition}[\cite{DGS:75}]
\label{prop:m/2}
An ETF of size $d \times 2d$ exists whenever there is a skew Hadamard of order $2d$.
\end{proposition}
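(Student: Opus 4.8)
The plan is to build the frame directly from the skew-symmetric conference matrix underlying the Hadamard. First I would write the skew Hadamard of order $2d$ as $H = C + I$ with $C^\top = -C$, and expand the defining identity $H^\top H = 2d\,I$. Using skew-symmetry,
\[
2d\,I = (I - C)(I + C) = I - C^2,
\]
so that $C^2 = -(2d-1)\,I$. Consequently the Hermitian matrix $Q := iC$ satisfies $Q^* = Q$ and $Q^2 = (2d-1)\,I$, so its spectrum is contained in $\{\pm\sqrt{2d-1}\}$. Since $\operatorname{tr} Q = i\operatorname{tr} C = 0$, the two eigenvalues occur with equal multiplicity $d$.

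Next I would pass to the orthogonal projection onto the $(+\sqrt{2d-1})$-eigenspace,
\[
P := \tfrac12\Bigl(I + \tfrac{1}{\sqrt{2d-1}}\,Q\Bigr),
\]
a Hermitian idempotent of rank $d$ acting on $\mathbb{C}^{2d}$. Because $C$ has zero diagonal and off-diagonal entries in $\{\pm1\}$, every diagonal entry of $P$ equals $\tfrac12$ and every off-diagonal entry has modulus $\tfrac{1}{2\sqrt{2d-1}}$.

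Finally I would factor $P$ through its range: choosing an orthonormal basis of the range as the columns of a $2d \times d$ matrix $V$ (so that $V^*V = I_d$ and $VV^* = P$), I set $F := \sqrt{2}\,V^*$ and claim this is the desired ETF of size $d \times 2d$. Indeed $FF^* = 2\,V^*V = 2I = \tfrac{n}{d}\,I$, which is the tight-frame condition~(ii). Moreover the Gram matrix is $F^*F = 2P$, whose diagonal entries all equal $1$, so the columns of $F$ are unit vectors, and whose off-diagonal entries have modulus $\tfrac{1}{\sqrt{2d-1}} = \mu$, which is the equiangularity condition~(i). By the Welch bound proposition, $F$ is an ETF.

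The computation is short, so I do not expect a serious obstacle. The one step to get exactly right is deriving $C^2 = -(2d-1)I$ from the skew Hadamard identity: this is precisely what makes $iC$ Hermitian with two eigenvalues of equal multiplicity, and hence what guarantees that the associated rank-$d$ projection has constant diagonal and constant-modulus off-diagonal entries---the two features that turn the range of $P$ into an equiangular tight frame.
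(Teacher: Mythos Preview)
Your argument is correct and follows the same route the paper sketches: write $H=C+I$, use $C^2=-(2d-1)I$ to see that $S=iC$ satisfies $S^2=(2d-1)I$, and conclude that $S$ is the signature matrix of a $d\times 2d$ ETF. The only difference is cosmetic: the paper invokes Proposition~\ref{prop:sig} directly, whereas you unpack that proposition by hand via the spectral projection $P=\tfrac12\bigl(I+\tfrac{1}{\sqrt{2d-1}}S\bigr)$ and its factorization.
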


A skew Hadamard matrix of order $m$ exists only if $m=1$, 2, or a multiple of~4.
Thus, $d = 3 \bmod 4$ in Theorem~\ref{thm:main} and $d$ is even in Proposition~\ref{prop:m/2} (or else $d = 1$).
Skew Hadamards are known to exist for infinitely many orders, including every $m=4k$ with {$k \leq 97$}.
(See~\cite{D:23,KS:08,CK:07}.)
By Theorem~\ref{thm:main}, a complex ETF of size $d \times 2d$ exists for every $d \leq 387$ with $d = 3 \bmod 4$.
Comparing with~\cite{FM:15} (which restricts to $d < 150$), we have new ETF sizes for the following:
\[
d = 11,\,  35,\,  39,\,  43,\,  47,\,  59,\,  67,\,  71,\,  83,\,  95,\, 103,\,  107,\,  111,\, 
119,\, 123,\,  127,\, 131,\, 143.
\]
For every $d$ above, the optimal coherence of $2d$ lines in $\mathbb{C}^d$ was unknown before~now.
Corresponding ETFs are included as ancillary files with the arXiv version of this paper.

A strong version of the Hadamard conjecture asserts that skew Hadamards exist for every order divisible by~4~\cite{W:71b}.
If this is true, then the Welch bound is attained by some set of $2d$ lines in $\mathbb{C}^d$ whenever $d$ is even (by Proposition~\ref{prop:m/2}) and whenever $d = 3 \bmod 4$ (by Theorem~\ref{thm:main}).
This would leave only $d = 1 \bmod 4$.
Furthermore, examples of $d \times 2d$ ETFs are known for infinitely many $d = 1 \bmod 4$, including all such $d < 150$ except the following~\cite{FM:15}:
\[
d = 17,\, 29,\, 53,\, 65,\, 73,\, 77,\, 81,\, 89,\, 93,\, 101,\, 105,\, 109,\, 125,\, 
133,\, 137,\, 149.
\]
(An example for $d = 33$ was recently discovered~\cite{G:21}.)
This suggests the following.

\begin{conjecture}
\label{conj:dx2d}
The Welch bound gives the optimal coherence of $2d$ lines in $\mathbb{C}^d$ for every $d$.
That is, for every $d$ there exists a complex ETF of size $d \times 2d$.
\end{conjecture}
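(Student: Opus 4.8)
The plan is to translate the existence of a $d \times 2d$ ETF into a purely combinatorial condition and then assemble constructions one residue class at a time. For $n = 2d$ the Welch bound reads $\mu = (2d-1)^{-1/2}$, so writing the Gram matrix of a putative ETF as $G = F^*F = I + \mu Q$, condition~(ii) forces $G$ to be $2$ times a rank-$d$ orthogonal projection, hence $G^2 = 2G$. Expanding $(I+\mu Q)^2 = 2(I + \mu Q)$ gives $Q^2 = (2d-1)I$, while~(i) makes $Q$ Hermitian with zero diagonal and unimodular off-diagonal entries. In other words $Q$ is a \emph{Hermitian complex conference matrix} of order $2d$, and conversely any such $Q$ yields a $d \times 2d$ ETF via $G = I + \mu Q$. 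So I would first reduce the conjecture to the equivalent statement that a Hermitian complex conference matrix exists in every even order $2d$.

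With this reformulation in hand I would dispatch two residue classes using the tools already in the excerpt. For $d$ even, Proposition~\ref{prop:m/2} produces the frame from a skew Hadamard of order $2d$; for $d \equiv 3 \pmod 4$, Theorem~\ref{thm:main} does so from a skew Hadamard of order $d+1$. Granting the strong Hadamard conjecture, both inputs exist, so the entire difficulty collapses onto $d \equiv 1 \pmod 4$. Here the classical Paley construction already succeeds whenever $2d-1$ is a prime power: then $d$ odd gives $2d-1 \equiv 1 \pmod 4$, yielding a symmetric conference matrix of order $2d$, which handles infinitely many such $d$. The genuinely open cases are exactly those $d \equiv 1 \pmod 4$ for which $2d-1$ is \emph{not} a prime power --- precisely the values $d = 17, 29, 53, \dotsc$ listed in the excerpt, for which one checks that $2d-1$ is always composite and non-prime-power.

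For these remaining values I would pursue complex conference matrices directly, because real ones are frequently ruled out: the Belevitch / van Lint--Seidel condition requires $2d-1$ to be a sum of two squares for a symmetric conference matrix of order $2d$ to exist, and this fails whenever some prime $\equiv 3 \pmod 4$ divides $2d-1$ to an odd power. I would therefore search for genuinely complex constructions --- for instance character-sum (generalized Paley) matrices over the ring $\mathbb{Z}/(2d-1)\mathbb{Z}$ or over products of small fields, Kronecker-type products combining Hermitian conference matrices of smaller coprime orders, and group-developed (difference-set) designs amenable to computer search. The doubling map of this paper, which converts a $\tfrac{d-1}{2} \times d$ ETF into a $d \times 2d$ ETF, also suggests a recursive attack: it would suffice to build the smaller family for every $d \equiv 1 \pmod 4$, though that family is itself an unsolved two-graph existence problem.

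The main obstacle is that this last step is equivalent to the notoriously open existence problem for conference matrices, now in its complex incarnation, for orders not of prime-power-plus-one type, where no uniform construction is known. Unlike complex Hadamard matrices, which exist in every order by the Fourier example, complex conference matrices possess no such universal template, and every known infinite family is tied to specific algebraic or number-theoretic structure. A complete proof of the conjecture would thus require a new infinite family of Hermitian complex conference matrices covering all residues of $2d-1$ --- in particular settling infinitely many cases in which, by the Belevitch obstruction, no real ETF can exist --- and it is this construction, rather than any of the reductions above, that I expect to be the crux.
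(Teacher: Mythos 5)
You were asked to prove Conjecture~\ref{conj:dx2d}, which the paper itself does not prove: it is posed as an open problem, supported only by partial results (Proposition~\ref{prop:m/2} for even $d$, Theorem~\ref{thm:main} for $d \equiv 3 \pmod 4$, known examples for many $d \equiv 1 \pmod 4$, plus a merely numerical $17 \times 34$ frame found by alternating projections). So no complete proof was available to compare against, and your proposal, to its credit, does not pretend to give one. Your reduction is correct and is exactly the paper's framework: setting $n = 2d$ in~\eqref{eq:c} gives $c = 0$, so by Proposition~\ref{prop:sig} a $d \times 2d$ ETF exists if and only if there is a self-adjoint matrix $S$ with zero diagonal, unimodular off-diagonal entries, and $S^2 = (2d-1)I$ --- your Hermitian complex conference matrix, a viewpoint the paper itself records in the remark following Corollary~\ref{cor:main const}. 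Your residue-class bookkeeping also matches the paper: even $d$ and $d \equiv 3 \pmod 4$ are covered conditionally on the skew Hadamard conjecture (which you correctly flag as an assumption), the Paley symmetric conference matrices handle $2d - 1$ a prime power (necessarily $\equiv 1 \pmod 4$ for odd $d$, as you note), and the Belevitch sum-of-two-squares obstruction correctly shows that infinitely many remaining cases demand genuinely complex matrices.

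Two cautions. First, your claim that the open cases are \emph{exactly} the $d \equiv 1 \pmod 4$ with $2d-1$ not a prime power is too strong: $d = 33$ has $2d - 1 = 65 = 5 \cdot 13$, yet a $33 \times 66$ ETF exists~\cite{G:21}, as the paper notes --- so the non-prime-power condition is necessary but not sufficient for a case to be open, since sporadic constructions intervene. Second, and decisively, the final step of your program --- an infinite family of Hermitian complex conference matrices covering all even orders, in particular those killed by the Belevitch obstruction --- is precisely the open content of the conjecture; you identify this crux accurately but supply no mechanism for it, and the candidate tools you list (generalized Paley character sums over $\mathbb{Z}/(2d-1)\mathbb{Z}$, Kronecker-type products, difference-set searches) are known either to fail or to remain unresolved in the hard cases. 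Your observation that the paper's doubling theorem permits a recursive attack via the $\tfrac{d-1}{2} \times d$ family is likewise correct but only relocates the problem, since that family's existence is itself open. Judged as a proof, the gap is the theorem itself; judged as a reduction and survey of the state of the art, your writeup is accurate and faithfully aligned with the paper's own partial evidence.
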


After formulating this conjecture, the authors searched for and located a numerical ETF of size $17 \times 34$ using alternating projections~\cite{TDHS:05}.

\section{Preliminaries}
Any ETF $F \in \mathbb{C}^{d \times n}$ is determined up to a unitary by its Gram matrix $F^* F$, which has ones on the diagonal and off-diagonal entries of modulus $\mu$ given by~\eqref{eq:mu}.
Then $F$ can be recovered up to a unitary by the phases of the off-diagonal entries in $F^*F$, as recorded by the \textbf{signature matrix} $S = \tfrac{1}{\mu}(F^* F - I)$.

\begin{proposition}[\cite{HP:04}]
\label{prop:sig}
~
\begin{itemize}
\item[(a)]
If $S$ is the signature matrix of an ETF with size $d \times n$,
then $S=S^*$ has all zero diagonal entries and unimodular off-diagonal entries, and it satisfies $S^2 = cS + (n-1) I$ for
\begin{equation}
\label{eq:c}
c = (n-2d) \sqrt{ \frac{ n-1 }{ d (n-d) } }.
\end{equation}
\item[(b)]
If $S=S^*$ has all zero diagonal entries and unimodular off-diagonal entries, and if it satisfies $S^2 = cS+(n-1)I$ for some $c$, then it is the signature matrix of some ETF with size $d \times n$, where $d$ is uniquely determined by~\eqref{eq:c}.
\end{itemize}
\end{proposition}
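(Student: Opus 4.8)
The plan is to prove the two directions separately, treating part~(a) as a direct computation and part~(b) as a spectral argument. For part~(a), I would start from an ETF $F \in \mathbb{C}^{d \times n}$ and pass to its Gram matrix $G = F^*F = I + \mu S$. Since $G$ is Hermitian with unit diagonal and off-diagonal entries of modulus $\mu$, the matrix $S = \tfrac{1}{\mu}(G - I)$ is automatically Hermitian with zero diagonal and unimodular off-diagonal entries, so the only nontrivial claim is the quadratic relation. I would extract this from the tight-frame condition $FF^* = \tfrac{n}{d}I$, which gives $G^2 = F^*(FF^*)F = \tfrac{n}{d}F^*F = \tfrac{n}{d}G$. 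Substituting $G = I + \mu S$ and solving for $S^2$ yields
\[
S^2 = \tfrac{1}{\mu^2}\cdot\tfrac{n-d}{d}\,I + \tfrac{1}{\mu}\cdot\tfrac{n-2d}{d}\,S,
\]
and plugging in the Welch value $\mu^2 = \tfrac{n-d}{d(n-1)}$ from~\eqref{eq:mu} collapses the two coefficients to $n-1$ and $c$ respectively. This is routine algebra, so I expect no difficulty in this direction.

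For part~(b), the idea is to reverse the construction: build the candidate Gram matrix $G = I + \mu S$ for a suitable $\mu > 0$ and show it is $\tfrac{n}{d}$ times a rank-$d$ orthogonal projection, hence the Gram matrix of an ETF. Since $S = S^*$ satisfies $S^2 = cS + (n-1)I$, its spectrum lies among the two roots $\lambda_\pm = \tfrac{1}{2}\bigl(c \pm \sqrt{c^2 + 4(n-1)}\bigr)$ of $t^2 - ct - (n-1)$; because their product is $-(n-1) < 0$, one root is positive and one negative, say $\lambda_- < 0 < \lambda_+$. A quick trace check (the vanishing diagonal forces $\operatorname{tr} S = 0$, and neither eigenvalue can occur alone since $n \geq 2$) shows both eigenvalues appear, with multiplicities $p$ and $q = n - p$ satisfying $p\lambda_+ + q\lambda_- = 0$, which pins down $p = \tfrac{-n\lambda_-}{\lambda_+ - \lambda_-}$. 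I would then set $\mu = -1/\lambda_-$ and $d = p$, so that $G = I + \mu S$ annihilates the $\lambda_-$-eigenspace and scales the $\lambda_+$-eigenspace by $1 + \mu\lambda_+$. A short computation using $\lambda_+\lambda_- = -(n-1)$ and $\lambda_+/\lambda_- = (d-n)/d$ then confirms both $1 + \mu\lambda_+ = \tfrac{n}{d}$ and $\mu = \sqrt{(n-d)/(d(n-1))}$, so $\mu$ is exactly the Welch bound for these parameters.

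With $G$ in hand, the remaining steps are formal. The matrix $G$ is positive semidefinite of rank $d$ with eigenvalues $0$ and $\tfrac{n}{d}$, satisfying $G^2 = \tfrac{n}{d}G$, so it factors as $G = F^*F$ for some $F \in \mathbb{C}^{d \times n}$ with $FF^* = \tfrac{n}{d}I$; the unit diagonal and modulus-$\mu$ off-diagonal of $G$ make the columns of $F$ unit vectors meeting conditions~(i) and~(ii) of the Welch bound, so $F$ is an ETF of size $d \times n$ with signature matrix $S$. Finally, reading the part~(a) computation backward shows this $d$ satisfies~\eqref{eq:c}; and since squaring~\eqref{eq:c} produces a quadratic in $d$ whose two roots are symmetric about $n/2$, the sign constraint matching $n-2d$ to $c$ selects a unique root, so $d$ is determined by~\eqref{eq:c}. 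The main obstacle is the bookkeeping in part~(b): one must check that the single choice $\mu = -1/\lambda_-$ simultaneously forces $G \succeq 0$, makes $G$ a scalar multiple of a projection, and reproduces the Welch value of $\mu$ together with $d = p$. Keeping the eigenvalue identities organized so that these three requirements coincide is where the argument must be handled with care.
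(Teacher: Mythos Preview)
The paper does not prove this proposition; it is quoted as a known result from~\cite{HP:04} without proof, so there is nothing in the paper to compare your argument against. Your proof is correct and complete: part~(a) is the standard Gram-matrix computation, and in part~(b) your spectral argument is sound---the key points (both eigenvalues of $S$ occur, the choice $\mu=-1/\lambda_-$ makes $I+\mu S$ a rank-$d$ multiple of a projection, and the resulting $\mu$ matches the Welch value) are all verified, as is the uniqueness of $d$ via the sign of $n-2d$.
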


In particular, if $S$ is the signature matrix of a $d \times n$ ETF~$F$, then $-S$ is the signature matrix of some $(n-d) \times n$ ETF~$G$.
Any such $G$ is called a \textbf{Naimark complement} of $F$, and it satisfies $G^* G = I - \nu S$ for
\begin{equation}
\label{eq:nu}
\nu = \sqrt{ \frac{d}{(n-d)(n-1)} }.
\end{equation}
Since $F^* F = I + \mu S$, the $n \times n$ matrix
\[
U = \left[ \begin{array}{c} \sqrt{ \tfrac{d}{n} } F \\[8 pt] \sqrt{ \tfrac{n-d}{n} } G \end{array} \right]
\]
satisfies $U^*U = I$ and is unitary.
Expanding $UU^* = I$, one finds the relations
\[
FG^* = 0
\qquad
\text{and}
\qquad
GF^* = 0.
\]

\section{Doubling equiangular tight frames}
To prove the main result, we introduce a technique involving signature matrices that converts an ETF of size $d \times n$ with $\tfrac{n}{d} \approx 2$ into another ETF of size $n \times 2n$.
For inspiration, consider a proof of Proposition~\ref{prop:m/2}.
If $H = C+I$ is a skew Hadamard of order~$m$, then $C^2 = -(m-1)I$ and $S = iC$ is the signature matrix of an $\tfrac{m}{2} \times m$ ETF, by Proposition~\ref{prop:sig}.
To obtain an $m \times 2m$ ETF, one can apply the \textit{Hadamard doubling} construction of~\cite{W:71a}, which converts $H$ into a new skew Hadamard
\[
\left[ \begin{array}{cc} C+I & C+I \\ C-I & -C+I \end{array} \right]
\]
of order~$2m$.
In effect, Hadamard doubling converts $S$ into a new signature matrix
\[
\left[ \begin{array}{cc} S & S+iI \\ S-iI & -S \end{array} \right]
\]
of an $m \times 2m$ ETF.
As we now show, this procedure generalizes even for signature matrices not given by skew Hadamards.

\begin{theorem}[ETF doubling]
\label{thm:doubling}
Suppose $n > d$ and $|c| \leq 1$ in~\eqref{eq:c}.
If there is a real or complex ETF of size $d \times n$, then there is a complex ETF of size $n \times 2n$.
Specifically, the following hold for $\epsilon = \pm 1$ and $\beta = -c + \epsilon i\sqrt{1 - c^2}$.
\begin{itemize}
\item[(a)]
If $S$ is the signature matrix of a real or complex ETF with size $d \times n$, then
\begin{equation}
\label{eq:Sigma}
\Sigma=\left[ \begin{array}{cc} S & S + \beta I \\ S + \overline{\beta} I & -S \end{array} \right]
\end{equation}
is the signature matrix of a complex ETF with size $n \times 2n$.
\smallskip
\item[(b)]
If $F$ is a $d \times n$ ETF with signature matrix $S$ and Naimark complement $G$, then there exist constants ${a,b>0}$ and $w,z \in \mathbb{C}$ such that
\[
\Phi =
\left[ \begin{array}{cc}
a F & w F \\
b G & z G
\end{array} \right]
\]
is an ETF of size $n \times 2n$.
Namely, for $\lambda = \tfrac{1}{\sqrt{2n-1}}$ one can take
\[
a = \sqrt{ \frac{ \nu + \lambda }{ \mu + \nu } },
\quad
b = \sqrt{ \frac{ \mu - \lambda }{ \mu + \nu } }, 
\quad
w = \frac{ \lambda(n-d +\mu \beta d) }{a \mu n},
\quad
z = \frac{-\lambda\big( d - \nu \beta (n-d) \big)}{b \nu n}
\]
with $\mu$ as in~\eqref{eq:mu} and $\nu$ as in~\eqref{eq:nu}, and then $\Phi$ has signature matrix $\Sigma$ in~\eqref{eq:Sigma}.
\end{itemize}

\end{theorem}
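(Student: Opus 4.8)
The plan is to prove part~(a) by directly verifying the hypotheses of Proposition~\ref{prop:sig}(b) for the matrix $\Sigma$ in~\eqref{eq:Sigma}, and then to extract the explicit ETF in part~(b) by matching the Naimark structure recorded in the Preliminaries.

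For part~(a), I would first check that $\Sigma$ is self-adjoint with zero diagonal and unimodular off-diagonal entries. Self-adjointness and the zero diagonal are immediate from the block form, since $S=S^*$ has zero diagonal and the off-diagonal blocks $S+\beta I$ and $S+\overline\beta I$ are conjugate transposes of each other. The unimodularity of the off-diagonal entries of these blocks is exactly where the choice $\beta = -c + \epsilon i\sqrt{1-c^2}$ and the hypothesis $|c|\le 1$ enter: the off-diagonal entries of $S+\beta I$ coincide with those of $S$ (hence are unimodular), while the diagonal entries become $\beta$, and I expect $|\beta|=1$ to follow from $|\beta|^2 = c^2 + (1-c^2) = 1$. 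The real content is the quadratic relation. Using $S^2 = cS + (n-1)I$ from Proposition~\ref{prop:sig}(a), I would compute the four blocks of $\Sigma^2$ and show that $\Sigma^2 = c' \Sigma + (2n-1)I$ for the appropriate constant $c'$ dictated by~\eqref{eq:c} with parameters $2n$ and $n$. The diagonal blocks of $\Sigma^2$ expand to $S^2 + (S+\beta I)(S+\overline\beta I)$ and its counterpart, which should simplify via $\beta + \overline\beta = -2c$ and $|\beta|^2=1$; the off-diagonal blocks involve cross terms like $S(S+\beta I) - (S+\beta I)S$, where the commutator $[S,\beta I]=0$ is what makes the computation collapse cleanly. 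I anticipate that $c' = 0$, consistent with a $2n$-line ETF in $\mathbb{C}^n$ having $n = \tfrac{1}{2}(2n)$, so that~\eqref{eq:c} forces $c'=0$; this is the sanity check that pins down the final relation. Once $\Sigma^2 = (2n-1)I$ is verified, Proposition~\ref{prop:sig}(b) delivers an ETF of size $d' \times 2n$ with $d'$ determined by~\eqref{eq:c}, and I would confirm $d' = n$.

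For part~(b), the strategy is to build $\Phi$ from the given $F$ and its Naimark complement $G$ and then show its signature matrix equals $\Sigma$. I would start from the relations $F^*F = I + \mu S$, $G^*G = I - \nu S$, and $FG^* = GF^* = 0$ recorded in the Preliminaries, and compute the Gram matrix $\Phi^*\Phi$ in block form. Each block is a linear combination of $I$ and $S$ with coefficients that are quadratic in the unknowns $a,b,w,z$; the orthogonality $FG^*=0$ kills the cross terms between the $F$-rows and $G$-rows, leaving four scalar equations. I would impose that $\Phi$ be an ETF of size $n\times 2n$: the diagonal of $\Phi^*\Phi$ must be $1$ (two normalization equations), and the off-diagonal structure must reproduce $I + \lambda\Sigma$ with $\lambda = \tfrac{1}{\sqrt{2n-1}}$ being the Welch parameter~\eqref{eq:mu} for $n\times 2n$. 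Matching the coefficient of $S$ in the top-left block to the coefficient of $S$ in the $(1,1)$ block of $\lambda\Sigma$ fixes the ratio, and the stated values of $a,b,w,z$ should emerge after solving; I would verify them by back-substitution rather than deriving them from scratch.

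The main obstacle I expect is bookkeeping in part~(b): one must simultaneously satisfy the normalization of columns, the tightness $\Phi\Phi^* = \tfrac{2n}{n}I = 2I$, and the requirement that the off-diagonal blocks of $\Phi^*\Phi$ produce the shifted blocks $S+\beta I$ with precisely the phase $\beta$. The subtlety is that the off-diagonal block $w F^* F + z G^* G$ (or the analogous combination) must equal $\lambda(S + \beta I)$, which forces both the coefficient of $S$ and the constant (the $\beta$-shift) to match, yielding the somewhat intricate formulas for $w$ and $z$ involving $\beta$, $\mu$, $\nu$, $d$, and $n$. I would handle this by treating the coefficients of $I$ and of $S$ separately in each block, reducing everything to a linear system once the normalizations $a,b$ are chosen, and checking positivity of the radicands $\nu+\lambda$ and $\mu-\lambda$ (which should follow from $n>d$ and $|c|\le 1$) to ensure $a,b$ are real and positive.
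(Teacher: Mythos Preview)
Your plan for part~(a) matches the paper's proof essentially line for line: verify the structural conditions on $\Sigma$, compute $\Sigma^2$ block by block using $S^2=cS+(n-1)I$ and $\beta+\overline\beta=-2c$, and conclude $\Sigma^2=(2n-1)I$, so that Proposition~\ref{prop:sig}(b) gives an $n\times 2n$ ETF.

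For part~(b) your route is genuinely different. You propose to take the explicit block matrix $\Phi$, compute $\Phi^*\Phi$ using $F^*F=I+\mu S$, $G^*G=I-\nu S$, $FG^*=0$, and then verify by back-substitution that $\Phi^*\Phi=I+\lambda\Sigma$. The paper instead argues in the other direction: it starts from \emph{some} ETF with signature matrix $\Sigma$ (existence guaranteed by~(a)), writes its Gram matrix in $2\times 2$ block form, observes that the top-left block equals $a^2F^*F+b^2G^*G$ for the stated $a,b$, applies a unitary to force the first block column to be $\left[\begin{smallmatrix}aF\\bG\end{smallmatrix}\right]$, and then \emph{derives} the formulas for $w$ and $z$ by left-multiplying the top-right Gram block by $F$ (respectively $G$) and using $FF^*=\tfrac{n}{d}I$, $FG^*=0$. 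Your approach is more elementary and self-contained (no unitary-freedom argument), but the verification of the $(2,2)$ block---checking $|w|^2+|z|^2=1$ and $|w|^2\mu-|z|^2\nu=-\lambda$ for those particular $w,z$---is a nontrivial algebraic identity that you will have to grind through, whereas the paper's derivation sidesteps this entirely by never needing to confirm it. Either way works; the paper's route explains where the formulas come from, yours confirms them directly. Your remark that $\mu>\lambda$ must be checked from $|c|\le1$ is correct and is exactly what the paper does first in its proof of~(b).
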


The hypothesis of Theorem~\ref{thm:doubling} is satisfied whenever $d = \tfrac{n}{2}$ for $n$ even, and whenever $d = \tfrac{n \pm 1}{2}$ for $n$ odd.
In the former case, $c =0$, and in the latter case,
$c = \mp \frac{2}{\sqrt{n+1}}$.
Beyond these two cases, the hypothesis of Theorem~\ref{thm:doubling} is not satisfied by any other size of a known ETF listed in the tables of~\cite{FM:15}.
(We do not know if the Welch bound can be attained when $|c| \leq 1$ and $d \notin \{ \tfrac{n}{2}, \tfrac{n \pm 1}{2} \}$, and we leave this as an open problem.)
The condition $|c| \leq 1$ imposes a narrow bound on $t = \tfrac{n}{d} - 2$.
Writing
\[
c = (\tfrac{n}{d} - 2) \sqrt{ \frac{ n - 1}{ \tfrac{n}{d} - 1 } } = t \sqrt{ \frac{ n - 1 }{ t + 1 } },
\]
the hypothesis $c^2 \leq 1$ rearranges to say $(n-1)t^2 - t - 1 \leq 0$.
Solving the quadratic inequality shows $|c| \leq 1$ if and only if
\begin{equation}
\label{eq:window around 2}
- \frac{\sqrt{4n-3} - 1}{2(n-1)} \leq \frac{n}{d} - 2 \leq \frac{\sqrt{4n-3}+1}{2(n-1)}.
\end{equation}
(Roughly, $|\frac{n}{d} - 2| \leq \frac{1}{\sqrt{n}}$.)
The small window around~2 plays a crucial role in our proof of the main result, which applies Theorem~\ref{thm:doubling} to ETFs with $d = \tfrac{n-1}{2}$, that is, $\frac{n}{d} - 2 = \frac{2}{n-1}$.

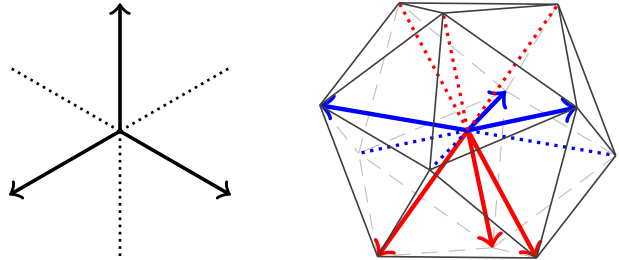
\begin{figure}[b]
\begin{center}
\resizebox{!}{98 pt}{
\begin{tikzpicture}
	\coordinate (0) at (0,0);
	\coordinate (1) at (0,1);
	\coordinate (2) at (-0.866025403784439,-0.5);
	\coordinate (3) at (0.866025403784439,-0.5);
	\coordinate (4) at (0,-1);
	\coordinate (5) at (0.866025403784439,0.5);
	\coordinate (6) at (-0.866025403784439,0.5);
	
	\coordinate (left) at (-1,0);
	\coordinate (right) at (1,0);
	
	\draw [white] (left) -- (right);
	
	\draw [thick, ->] (0) -- (1);
	\draw [thick, ->] (0) -- (2);
	\draw [thick, ->] (0) -- (3);
	
	\draw [semithick, densely dotted] (0) -- (4);
	\draw [semithick, densely dotted] (0) -- (5);
	\draw [semithick, densely dotted] (0) -- (6);
	
\end{tikzpicture}
}
\tdplotsetmaincoords{0}{0}
\resizebox{!}{98 pt}{
\begin{tikzpicture}[tdplot_main_coords]
	\tdplotsetrotatedcoords{15}{-15}{-15}
	\begin{scope}[tdplot_rotated_coords]

	\coordinate (0) at (0,0,0);
	\coordinate (1) at (0,0.187592474085080,-0.982246946376846);
	\coordinate (2) at (-0.850650808352040,0.187592474085080,0.491123473188423);
	\coordinate (3) at (0.850650808352040,0.187592474085080,0.491123473188423);
	\coordinate (4) at (0, -0.794654472291766,-0.607061998206686);
	\coordinate (5) at (-0.525731112119134,-0.794654472291766,0.303530999103343);
	\coordinate (6) at (0.525731112119134,-0.794654472291766,0.303530999103343);
	\coordinate (7) at (0,-0.187592474085080,0.982246946376846);
	\coordinate (8) at (0.850650808352040,-0.187592474085080,-0.491123473188423);
	\coordinate (9) at (-0.850650808352040,-0.187592474085080,-0.491123473188423);
	\coordinate (10) at (0, 0.794654472291766,0.607061998206686);
	\coordinate (11) at (0.525731112119134,0.794654472291766,-0.303530999103343);
	\coordinate (12) at (-0.525731112119134,0.794654472291766,-0.303530999103343);
	
	\draw [densely dashed, very thin, lightgray] (1) -- (4);
	\draw [densely dashed, very thin, lightgray] (1) -- (8);
	\draw [densely dashed, very thin, lightgray] (1) -- (9);
	\draw [densely dashed, very thin, lightgray] (1) -- (11);
	\draw [densely dashed, very thin, lightgray] (1) -- (12);
	\draw [densely dashed, very thin, lightgray] (2) -- (9);
	\draw [densely dashed, very thin, lightgray] (4) -- (5);
	\draw [densely dashed, very thin, lightgray] (4) -- (6);
	\draw [densely dashed, very thin, lightgray] (4) -- (8);
	\draw [densely dashed, very thin, lightgray] (4) -- (9);
	\draw [densely dashed, very thin, lightgray] (5) -- (9);
	\draw [densely dashed, very thin, lightgray] (9) -- (12);
			
	\draw [thick, blue, ->] (0) -- (1);
	\draw [thick, red, ->] (0) -- (4);
	\draw [semithick, blue, densely dotted] (0) -- (8);
	\draw [semithick, blue, densely dotted] (0) -- (9);
	\draw [semithick, red, densely dotted] (0) -- (11);
	\draw [semithick, red, densely dotted] (0) -- (12);
	\draw [thick, red, ->] (0) -- (5);
	\draw [thick, red, ->] (0) -- (6);
	\draw [thick, blue, ->] (0) -- (2);
	\draw [thick, blue, ->] (0) -- (3);
	\draw [semithick, red, densely dotted] (0) -- (10);
	\draw [semithick, blue, densely dotted] (0) -- (7);
	
	\draw [darkgray, line width=0.3pt] (2) -- (5);
	\draw [darkgray, line width=0.3pt] (2) -- (7);
	\draw [darkgray, line width=0.3pt] (2) -- (10);
	\draw [darkgray, line width=0.3pt] (2) -- (12);
	\draw [darkgray, line width=0.3pt] (3) -- (6);
	\draw [darkgray, line width=0.3pt] (3) -- (7);
	\draw [darkgray, line width=0.3pt] (3) -- (8);
	\draw [darkgray, line width=0.3pt] (3) -- (10);
	\draw [darkgray, line width=0.3pt] (3) -- (11);
	\draw [darkgray, line width=0.3pt] (5) -- (6);
	\draw [darkgray, line width=0.3pt] (5) -- (7);
	\draw [darkgray, line width=0.3pt] (6) -- (7);
	\draw [darkgray, line width=0.3pt] (6) -- (8);
	\draw [darkgray, line width=0.3pt] (7) -- (10);
	\draw [darkgray, line width=0.3pt] (8) -- (11);
	\draw [darkgray, line width=0.3pt] (10) -- (11);
	\draw [darkgray, line width=0.3pt] (10) -- (12);
	\draw [darkgray, line width=0.3pt] (11) -- (12);

	\end{scope}
\end{tikzpicture}
}
\end{center}

\caption{
When $\tfrac{n}{d} \approx 2$, Theorem~\ref{thm:doubling} doubles an ETF of size $d\times n$ (left) to make another ETF of size $n \times 2n$ (right).
The double of a real ETF typically has complex entries, but 
it is real when $(d,n) = (2,3)$.
See Example~\ref{ex:icosahedron}.
}
\label{fig:icosahedron}
\end{figure}

\begin{example}
\label{ex:icosahedron}
Figure~\ref{fig:icosahedron} shows the $2 \times 3$ \textit{Mercedes--Benz} ETF together with its double of size $3 \times 6$.
Columns of the latter are vertices of a regular icosahedron.
Atypically, the doubled ETF has real entries in this case since $\beta = 1$ is real.
To produce the figure, we applied Theorem~\ref{thm:doubling}(b) using a row of all-ones for the Naimark complement.
The doubled ETF is made from two copies of Mercedes--Benz that are each ``lifted'' out of the $x_1x_2$-plane.
Here $b>0$ and $z<0$, so one copy lifts in the positive $x_3$-direction (blue) and the other lifts in the negative $x_3$-direction (red).
\end{example}

\begin{example}
Let $q = 3 \bmod 4$ be a prime power.
Then there is a complex ETF of size $\tfrac{q-1}{2} \times q$ associated with the Paley difference set of nonzero quadratic residues in the finite field of order~$q$~\cite{DF:07,SH:03,XZG:05}.
By Theorem~\ref{thm:doubling}, there is a complex ETF of size $q \times 2q$.
Taking $q=11$ shows there is a complex ETF of size~$11 \times 22$, which was unknown before now.
\end{example}

\begin{proof}[Proof of Theorem~\ref{thm:doubling}]
For (a), we apply Proposition~\ref{prop:sig}.
It suffices to verify $\Sigma^2 = (2n-1) I$.
Indeed,
\[
\Sigma^2 = 
\left[ \begin{array}{cc}
S^2 + (S+\beta I)(S + \overline{\beta} I) & 0 \\ 0 & (S+\overline{\beta} I)(S+\beta I) + S^2
\end{array} \right].
\]
The blocks on the diagonal coincide since polynomials of $S$ commute.
Both equal
\[
2S^2 + (2 \operatorname{Re} \beta)S + I = 2S^2 - 2cS + I,
\]
which simplifies to $(2n-1)I$ since $S^2 = cS + (n-1) I$.

For (b), we first verify $b >0$, that is, $\mu > \lambda$.
This is easy to check when $n = 2$ and $d = 1$.
Now assume $n \geq 3$.
Then $n \geq \sqrt{4n-3}$.
Subtracting~$1$ and dividing by $2(n-1)$ yields ${\tfrac{1}{2} \geq \frac{\sqrt{4n-3}-1}{2(n-1)}}$.
Applying~\eqref{eq:window around 2}, we find ${\tfrac{n}{d} -2 \geq -\tfrac{1}{2}}$, or ${\tfrac{n}{d} - 1 \geq \tfrac{1}{2}}$.
Dividing both sides by~$n-1$ shows
\[
\frac{n-d}{d(n-1)} \geq \frac{1}{2n-2} > \frac{1}{2n-1}.
\]
Thus, $\mu > \lambda$.

Next, fix an ETF
\[
\Phi = \left[ \begin{array}{cc} A & B \\ C & D \end{array} \right] \in \mathbb{C}^{n \times 2n}
\]
with signature matrix $\Sigma$, where $A,B \in \mathbb{C}^{d \times n}$ and $C,D \in \mathbb{C}^{(n-d) \times n}$.
Then $\Phi^* \Phi = I + \lambda \Sigma$, so that
\begin{equation}
\label{eq:blocks}
\left[ \begin{array}{cc}
A^* A + C^* C & A^* B + C^* D \\
B^* A + D^* C & B^* B + D^* D
\end{array} \right]
= 
\left[ \begin{array}{cc}
I + \lambda S & \lambda S+\lambda\beta I \\
\lambda S + \lambda\overline{\beta}I & I-\lambda S
\end{array} \right].
\end{equation}

To identify $A$ and $C$, first apply the relations $F^*F = I+\mu S$ and $G^*G = I-\nu S$ to observe that
\[
\left[ \begin{array}{cc} aF^* & bG^* \end{array} \right]
\left[ \begin{array}{c} aF \\ b G \end{array} \right]
= (a^2 + b^2) I + (a^2 \mu - b^2 \nu) S
= 
I + \lambda S
=
\left[ \begin{array}{cc} A^* & C^* \end{array} \right]
\left[ \begin{array}{c} A \\ C \end{array} \right].
\]
Consequently, there is a unitary $U \in \mathbb{C}^{2n \times 2n}$ for which $U \left[ \begin{array}{c} A \\ C \end{array} \right] = \left[ \begin{array}{c} aF \\ bG \end{array} \right]$.
Now $U \Phi$ is also an ETF with signature matrix $\Sigma$, and by replacing $\Phi$ with $U\Phi$ we may assume $A = aF$ and $C = bG$.

To identify $B$ and $D$, compare the top-right entries in~\eqref{eq:blocks} to find
\begin{equation}
\label{eq:top right}
aF^* B + bG^* D = \lambda S + \lambda \beta I.
\end{equation}
Multiplying on the left by $F$ gives
\[
a F F^* B + bFG^* D = \lambda FS + \lambda \beta F.
\]
Here, $FF^* = \tfrac{n}{d} I$, while $FG^* = 0$ since $F$ and $G$ are Naimark complements.
Meanwhile, $FS = \tfrac{1}{\mu}F(F^*F - I) = \tfrac{1}{\mu}( \tfrac{n}{d} - 1) F$.
Overall,
\[
\frac{an}{d} B + 0 = \frac{\lambda}{\mu} \Big( \frac{n}{d} - 1 \Big)F + \lambda \beta F,
\]
which rearranges to show $B = w F$.
A similar argument multiplying~\eqref{eq:top right} on the left by $G$ gives $D = zG$.
\end{proof}

\begin{remark}
The double of an ETF can be doubled again.
Doing this in practice with Theorem~\ref{thm:doubling}(b) requires a Naimark complement of $\Phi$.
Such a complement can be obtained by doubling $G$ (as opposed to $F$) while using the other choice of $\epsilon = \pm 1$.
Indeed, this gives an opposite signature matrix in~\eqref{eq:Sigma}.
\end{remark}

Repeated ETF doubling proves the following.

\begin{corollary}
If there is a real or complex ETF of size $d \times 2d$, then there is a complex ETF of size $2^k d \times 2(2^{k} d)$ for every~$k$.
\end{corollary}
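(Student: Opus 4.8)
The plan is to argue by induction on $k$, invoking Theorem~\ref{thm:doubling} at each stage to double the current ETF. The one thing that must be checked is that the hypothesis $|c| \leq 1$ persists under doubling, so that the construction can be iterated indefinitely without obstruction. The base case $k = 0$ is immediate: the claimed ETF has size $d \times 2d$, which exists by assumption.

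For the inductive step, I would suppose we already have an ETF of size $2^k d \times 2(2^k d)$, and abbreviate $d_k = 2^k d$ and $n_k = 2 d_k$, so that this is an ETF of size $d_k \times n_k$ with aspect ratio $n_k / d_k = 2$ exactly. The key observation is that aspect ratio~$2$ forces $c = 0$ in~\eqref{eq:c}, since the factor $n_k - 2 d_k$ vanishes. Hence $|c| = 0 \leq 1$ and $n_k > d_k$, so Theorem~\ref{thm:doubling} applies and yields a complex ETF of size
\[
n_k \times 2 n_k = 2 d_k \times 4 d_k = 2^{k+1} d \times 2\bigl(2^{k+1} d\bigr).
\]
This is precisely the asserted size at level $k+1$, which completes the induction.

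There is essentially no obstacle to overcome here: doubling carries an ETF of aspect ratio~$2$ to another of aspect ratio~$2$ (from $m \times 2m$ to $2m \times 4m$), and every aspect-ratio-$2$ ETF has $c = 0$, so the narrow window~\eqref{eq:window around 2} is satisfied trivially at every step. The only point worth flagging is the real/complex bookkeeping: the initial ETF is permitted to be real, but after a single application of Theorem~\ref{thm:doubling} all subsequent ETFs are complex. This is fully consistent, since Theorem~\ref{thm:doubling} accepts a real \emph{or} complex ETF as input and always produces a complex one as output.
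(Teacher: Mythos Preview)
Your proof is correct and matches the paper's approach exactly: the paper simply asserts ``Repeated ETF doubling proves the following'' without spelling out details, and your induction with the observation that $c=0$ at aspect ratio~$2$ is precisely the intended argument.
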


In particular, to prove Conjecture~\ref{conj:dx2d} it would be enough to produce a $d \times 2d$ complex ETF for every odd~$d$.
This is similar to the skew Hadamard conjecture, where it would be enough to produce a skew Hadamard of order $4k$ for every odd~$k$.

\begin{remark}
Similar doubling theorems apply for Welch-type optimal packings of higher-dimensional subspaces.
Specifically, assume $d$, $r$, and $n$ satisfy
\[
c= (nr - 2d) \sqrt{ \frac{ n-1 }{d(nr-d) } } \in [-1,1],
\]
and suppose there is an \textit{equi-isoclinic tight fusion frame}~\cite{FJMW:17} consisting of $n$ subspaces with dimension $r$ in $\mathbb{R}^d$ or $\mathbb{C}^d$, briefly, an $\operatorname{EITFF}(d,r,n)$.
Then a procedure just like that of Theorem~\ref{thm:doubling}(a) produces a complex $\operatorname{EITFF}(rn,r,2n)$.
The same holds for \textit{equi-chordal tight fusion frames}~\cite{FJMW:17}.
\end{remark}

\section{ETFs from skew Hadamards}
To prove the main result, we double ETFs from a particular class.
Let $H$ be a skew Hadamard.
If $D$ is any diagonal matrix with $\pm 1$ diagonal entries, then $DHD$ is also a skew Hadamard.
The \textbf{normalization} of~$H$ is the unique such $DHD$ with all $+1$ in the top row.

\begin{definition}
\label{defn:coreAdj}
Given a skew Hadamard matrix $H$ of order $m>1$, decompose its normalization as a block array
\[
\left[ \begin{array}{rc}
1 & \mathbf{1}^\top \\
-\mathbf{1} & A - A^\top+I
\end{array}
\right],
\]
where $\mathbf{1}$ is a column vector of ones and $A$ is a $(0,1)$-matrix of order $m-1$ with zero diagonal.
We call $A$ the \textbf{core adjacency matrix} of $H$.
\end{definition}

This terminology is our own.
In words, the core adjacency matrix gives the off-diagonal locations of $+1$ in the ``core'' of the normalization.

\begin{proposition}[\cite{S:08}]
\label{prop:core}
Suppose there is a skew Hadamard of order~$m>2$,
and let $A$ be its core adjacency matrix.
For
\[
\alpha = -\frac{1}{\sqrt{m}} + i \sqrt{1 - \frac{1}{m}},
\]
$S=\alpha A + \overline{\alpha} A^\top$ is the signature matrix of an $\tfrac{n-1}{2} \times n$ complex ETF with $n = m-1$.\end{proposition}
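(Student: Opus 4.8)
The plan is to invoke Proposition~\ref{prop:sig}(b): it suffices to check that $S = \alpha A + \overline{\alpha} A^\top$ is Hermitian with zero diagonal and unimodular off-diagonal entries, that $S^2 = cS + (n-1)I$ for some real $c$, and finally that the dimension $d$ forced by~\eqref{eq:c} equals $\tfrac{n-1}{2}$. The Hermitian and zero-diagonal properties are immediate, since $A$ is real with zero diagonal, so $S^* = \overline{\alpha}A^\top + \alpha A = S$. For the unimodular off-diagonal entries I would first record the combinatorial meaning of $A$: because the core $K = A - A^\top + I$ of the normalization (Definition~\ref{defn:coreAdj}) has off-diagonal entries in $\{\pm 1\}$, exactly one of $A_{ij}, A_{ji}$ equals $1$ for each pair $i \neq j$. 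Thus $A + A^\top = J - I$, where $J$ is the all-ones matrix, and every off-diagonal entry of $S$ equals $\alpha$ or $\overline{\alpha}$, both of modulus $1$ because $|\alpha|^2 = \tfrac{1}{m} + (1 - \tfrac{1}{m}) = 1$.

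The heart of the argument is the relation $S^2 = cS + (n-1)I$, and the main obstacle is extracting and correctly bookkeeping a quadratic identity for $A$. Expanding $HH^\top = H^\top H = mI$ block-wise yields $K\mathbf{1} = \mathbf{1}$ and $K^\top K = (n+1)I - J$. The first equation gives the regularity $A\mathbf{1} = A^\top \mathbf{1} = \tfrac{n-1}{2}\mathbf{1}$, hence $AJ = JA = \tfrac{n-1}{2}J$. Substituting $K = I + (A - A^\top)$ into the second and using skew-symmetry of $A - A^\top$ collapses it to $(A - A^\top)^2 = J - nI$. Combined with $A^\top = J - I - A$, this lets me solve for all quadratic products; the essential output is
\begin{equation}
\label{eq:Asquared}
A^2 = \tfrac{n+1}{4}(J - I) - A,
\qquad
AA^\top = A^\top A = \tfrac{n-3}{4}J + \tfrac{n+1}{4}I.
\end{equation}

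With~\eqref{eq:Asquared} in hand I would expand $S^2 = \alpha^2 A^2 + (AA^\top + A^\top A) + \overline{\alpha}^2 (A^\top)^2$, using $|\alpha|^2 = 1$ and $\alpha^2 + \overline{\alpha}^2 = 2\operatorname{Re}(\alpha^2) = \tfrac{4}{m} - 2 = -\tfrac{2(n-1)}{n+1}$. Collecting coefficients, the $I$-term simplifies to $nI$ and a single $J$-term of coefficient $-1$ survives, giving $S^2 = -\alpha^2 A - \overline{\alpha}^2 A^\top - J + nI$. The one point requiring care is this apparently spurious $-J$: rewriting $J = A + A^\top + I$ absorbs it, turning the expression into $-(1 + \alpha^2)A - (1 + \overline{\alpha}^2)A^\top + (n-1)I$. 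Since $|\alpha| = 1$ gives $\tfrac{1 + \alpha^2}{\alpha} = \alpha + \overline{\alpha}$, this is exactly $cS + (n-1)I$ with $c = -(\alpha + \overline{\alpha}) = \tfrac{2}{\sqrt{m}} = \tfrac{2}{\sqrt{n+1}}$.

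It remains to identify the dimension. Substituting $d = \tfrac{n-1}{2}$ into~\eqref{eq:c} gives $n - 2d = 1$ and $d(n-d) = \tfrac{n^2 - 1}{4}$, whence $c = \sqrt{4(n-1)/(n^2 - 1)} = \tfrac{2}{\sqrt{n+1}}$, matching the value found above. Since $m \equiv 0 \bmod 4$ forces $n = m-1$ to be odd, $\tfrac{n-1}{2}$ is a genuine integer, and $c$ determines $d$ uniquely by Proposition~\ref{prop:sig}(b). I expect the bulk of the difficulty to lie in deriving~\eqref{eq:Asquared} cleanly and in recognizing that the leftover $-J$ recombines with $A + A^\top$ to produce the clean form $cS + (n-1)I$; after that the verification is routine.
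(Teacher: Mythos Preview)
Your argument is correct. You carry out precisely the ``messy calculation'' that the paper alludes to but does not write out: you derive the quadratic relations for $A$ directly from $HH^\top = mI$ and then expand $S^2$ by hand, checking it against Proposition~\ref{prop:sig}(b). The paper instead takes a more structural route, observing that $\{I,A,A^\top\}$ are the adjacency matrices of a commutative association scheme and then analyzing the primitive idempotents $E_1,E_2$ of the adjacency algebra; the signature matrix drops out as the phase part of $E_1$, and the rank $d=\tfrac{n-1}{2}$ is read off from $\operatorname{tr} E_1$ together with $E_2=\overline{E_1}$. Your approach is more elementary and entirely self-contained, requiring no association-scheme machinery, and it makes transparent exactly where the specific value of $\alpha$ enters (through $1+\alpha^2=(\alpha+\overline{\alpha})\alpha$). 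The paper's approach, by contrast, explains \emph{why} such an $\alpha$ exists without having to guess it in advance, and would generalize more readily to other schemes. Incidentally, your relations $A^2=\tfrac{n-3}{4}A+\tfrac{n+1}{4}A^\top$ and $AA^\top=\tfrac{n-1}{2}I+\tfrac{n-3}{4}(A+A^\top)$ are the correct ones; the coefficients displayed in the paper appear to contain typos, which is harmless there since the paper's detailed proof bypasses them.
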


In effect, $S$ is a modified ``core'' of the normalized Hadamard, where off-diagonal $+1$ entries are replaced with $\alpha$ and $-1$ entries are replaced with $\overline{\alpha}$.
We attribute Proposition~\ref{prop:core} to Strohmer~\cite{S:08}, who proved a conjecture of Renes~\cite{R:07}.
Indeed, our description of the signature matrix can be gleaned from the proof of Theorem~1.1 in~\cite{S:08}.
Here we give a new proof based on the theory of association schemes~\cite{BI:84}.

\begin{proof}[Proof of Proposition~\ref{prop:core}]
It is well known that $I$, $A$, and $A^\top$ are the adjacency matrices of a commutative association scheme, where each of $A$ and $A^\top$ has exactly $\tfrac{n-1}{2}$ ones per row, and where 
\[
A^2 = \tfrac{n-3}{4} A + \tfrac{n-1}{4} A^\top,
\qquad
(A^\top)^2 = \tfrac{n-3}{4} A^\top + \tfrac{n-1}{4} A,
\]
\[
A A^\top = A^\top A = \tfrac{n-3}{2} I + \tfrac{n-3}{4} A + \tfrac{n-3}{4} A^\top.
\]
(See~\cite{H:00}, for instance.)
Now one can either do a messy calculation to verify that 
\[
S^2 = \tfrac{2}{\sqrt{n+1}} S + (n-1) I,
\]
or one can ignore the relations and argue as follows.

The adjacency matrices form a basis for the adjacency algebra $\operatorname{span}\{I,A,A^\top\}$, and when any matrix from the algebra is expanded in this basis, the coefficients are just the matrix entries.
Consider a spectral basis for the adjacency algebra consisting of mutually orthogonal primitive idempotents $\tfrac{1}{n}J$, $E_1$, and $E_2$ (projections onto the eigenspaces of $A$). 
Since $E_1 = E_1^*$, it must expand as $E_1 = zA+\overline{z}A^\top+\tfrac{d}{n} I$ for some $z\in \mathbb{C}$, where $d = \operatorname{tr} E_1 = \operatorname{rank} E_1 < n-1$.
Then $\tfrac{n}{d} E_1$ is the Gram matrix of an ETF with size $d \times n$, and considering the phases of its off-diagonal entries, the signature matrix is $\tfrac{z}{|z|} A + \tfrac{\overline{z}}{|z|} A^\top$.

It remains to show $d = \tfrac{n-1}{2}$ and $\tfrac{z}{|z|} = \alpha$.
For the dimension, observe that $z \not\in \mathbb{R}$ since $E_1 \not\in \operatorname{span}\{I,J\}$ by virtue of its rank.
The entrywise complex conjugate $\overline{E_1} \neq E_1$ must be another primitive idempotent, and so $E_2 = \overline{E_1}$ also has rank~$d$.
Since the ranks of the primitive idempotents add up to $n$, we conclude that $d = \tfrac{n-1}{2}$.

Next, the Frobenius norm of the orthogonal projection $E_1$ is the square root of its rank, and so
\[ (n^2-n)|z|^2 + n( \tfrac{d}{n} )^2 = d, \]
that is,
\[
|z|^2 = \frac{d(n-d)}{n^2(n-1)} = \frac{n+1}{4n^2}.
\]
Meanwhile, the entries in any row of $E_1$ add up to $0$ since $E_1J = 0$, and so
\[
z \frac{n-1}{2} + \overline{z} \frac{n-1}{2} + \frac{d}{n} = 0,
\]
that is,
\[
\operatorname{Re} z = - \frac{d}{n(n-1)} = -\frac{1}{2n}.
\]
It follows that $\operatorname{Im} z = \pm \frac{\sqrt{n}}{2n}$ and
\[
\frac{z}{|z|} = - \frac{1}{\sqrt{n+1}} \pm i \sqrt{ \frac{n}{n+1} }.
\]
After replacing $E_1$ with $\overline{E_1}$ if necessary, we find $\frac{z}{|z|} = \alpha$.
\end{proof}

When taken together, Theorem~\ref{thm:doubling}(a) and Proposition~\ref{prop:core} imply the following constructive version of the main result.

\begin{corollary}
\label{cor:main const}
Suppose there is a skew Hadamard of order~$m>2$, and let $A$ be its core adjacency matrix (Definition~\ref{defn:coreAdj}).
For
\[
\alpha = -\frac{1}{\sqrt{m}} + i \sqrt{1 - \frac{1}{m}}
\qquad
\text{and}
\qquad
\beta = -\frac{2}{\sqrt{m}} + i \sqrt{ 1 - \frac{4}{m} },
\]
then
\[
\Sigma=
\left[ \begin{array}{cc}
\alpha A + \overline{\alpha} A^\top & \alpha A + \overline{\alpha} A^\top + \beta I \\
\alpha A + \overline{\alpha} A^\top + \overline{\beta} I & - \alpha A - \overline{\alpha} A^\top
\end{array} \right]
\]
is the signature matrix of a complex ETF with size $(m-1) \times 2(m-1)$.
\end{corollary}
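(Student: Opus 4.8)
The plan is to obtain this corollary purely by composition: feed the ETF produced by Proposition~\ref{prop:core} into the doubling machine of Theorem~\ref{thm:doubling}(a). First I would invoke Proposition~\ref{prop:core} with $n = m-1$, which tells us that $S = \alpha A + \overline{\alpha} A^\top$ is the signature matrix of a complex ETF of size $d \times n$ with $d = \tfrac{n-1}{2} = \tfrac{m-2}{2}$. A skew Hadamard of order $m>2$ forces $m$ to be a multiple of~$4$, so this $d$ is a genuine integer and $n$ is odd; we are therefore squarely in the case $d = \tfrac{n-1}{2}$ flagged in the discussion following Theorem~\ref{thm:doubling}.

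Next I would verify the hypothesis $|c| \leq 1$ that Theorem~\ref{thm:doubling}(a) demands. Substituting $d = \tfrac{n-1}{2}$ into~\eqref{eq:c} gives $n - 2d = 1$, $n - d = \tfrac{n+1}{2}$, and $d(n-d) = \tfrac{n^2-1}{4}$, whence
\[
c = \sqrt{\frac{n-1}{(n^2-1)/4}} = \frac{2}{\sqrt{n+1}} = \frac{2}{\sqrt{m}}.
\]
Since $m \geq 4$, this yields $|c| = \tfrac{2}{\sqrt{m}} \leq 1$, with equality precisely when $m = 4$; the doubling hypothesis holds in every case.

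With $c = \tfrac{2}{\sqrt{m}}$ determined, I would read off the doubling parameter $\beta = -c + \epsilon i \sqrt{1-c^2}$ from Theorem~\ref{thm:doubling}. Taking $\epsilon = +1$ produces
\[
\beta = -\frac{2}{\sqrt{m}} + i\sqrt{1 - \frac{4}{m}},
\]
exactly the value named in the statement. Theorem~\ref{thm:doubling}(a) then asserts that the block matrix~\eqref{eq:Sigma} assembled from this $S$ and $\beta$ is the signature matrix of a complex ETF of size $n \times 2n = (m-1) \times 2(m-1)$. Substituting $S = \alpha A + \overline{\alpha} A^\top$ into each block of~\eqref{eq:Sigma} reproduces the displayed $\Sigma$ verbatim, which finishes the argument.

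I do not expect a genuine obstacle, since the corollary is a bookkeeping composition of two results already in hand. The only point requiring care is the sign tracking: one must confirm that the value of $c$ for $d = \tfrac{n-1}{2}$ is the \emph{positive} root $+\tfrac{2}{\sqrt{m}}$ (so that $-c$ supplies the negative real part of $\beta$) and that the branch choice $\epsilon = +1$ delivers the stated $\beta$ rather than its complex conjugate. This sign-matching, together with the one-line implication $m \geq 4 \Rightarrow |c| \leq 1$, constitutes essentially the whole of the work.
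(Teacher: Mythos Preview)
Your proposal is correct and matches the paper's approach exactly: the paper states that the corollary follows by combining Theorem~\ref{thm:doubling}(a) with Proposition~\ref{prop:core}, and you carry out precisely that composition, including the computation $c = \tfrac{2}{\sqrt{m}}$ and the branch choice $\epsilon = +1$.
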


\begin{remark}
In Corollary~\ref{cor:main const}, $\Sigma$ is a self-adjoint complex conference matrix of order~$2(m-1)$, and $\Sigma+i I$ is a complex Hadamard matrix of constant diagonal~\cite{S:13}.
\end{remark}

The following accounts for both Hadamard and ETF doubling.

\begin{corollary}
\label{cor:double main}
If there is a skew Hadamard of order $m$, then for every $j$ and $k$ there is an ETF of size $d \times 2d$ with $d = 2^k (2^j m-1)$.
\end{corollary}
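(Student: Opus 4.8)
The plan is to chain together the two doubling operations featured in the excerpt—Hadamard doubling and ETF doubling—so that the factor $2^j$ inside $d = 2^k(2^j m - 1)$ comes from iterating the former, while the outer factor $2^k$ comes from iterating the latter. I would organize the argument in three stages, each of which invokes a construction already established above.

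First I would apply Hadamard doubling $j$ times. The construction of \cite{W:71a} recalled at the start of Section~3 converts a skew Hadamard of order $m$ into a skew Hadamard of order $2m$; iterating it $j$ times produces a skew Hadamard of order $2^j m$. Second, I would feed this into the main result: applying Theorem~\ref{thm:main} with $d+1 = 2^j m$ (equivalently, the explicit Corollary~\ref{cor:main const}), the existence of a skew Hadamard of order $2^j m$ yields a complex ETF of size $(2^j m - 1) \times 2(2^j m - 1)$. Writing $d_0 = 2^j m - 1$, this is an ETF of size $d_0 \times 2 d_0$.

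Third I would run repeated ETF doubling on this frame. The corollary stated immediately after Theorem~\ref{thm:doubling} says that from any real or complex ETF of size $d_0 \times 2 d_0$ one obtains a complex ETF of size $2^k d_0 \times 2(2^k d_0)$ for every $k$. Substituting $d_0 = 2^j m - 1$ produces a complex ETF of size $2^k(2^j m - 1) \times 2\bigl(2^k(2^j m - 1)\bigr)$, which is exactly the asserted $d \times 2d$ ETF with $d = 2^k(2^j m - 1)$.

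I expect no genuine obstacle beyond bookkeeping, since every ingredient is already in hand; the one point that deserves a sentence is why ETF doubling may be iterated freely. Doubling a $d' \times 2d'$ ETF corresponds to $n = 2d'$, for which $c = (n - 2d')\sqrt{(n-1)/(d'(n-d'))} = 0$, so the hypothesis $|c| \le 1$ of Theorem~\ref{thm:doubling} holds automatically, and the output is again of the form $d'' \times 2d''$ with $d'' = 2d'$. This self-reproducing shape is precisely what lets the third stage close without any parameter conflict, and it is what the repeated-doubling corollary encapsulates. (For the two degenerate orders $2^j m \in \{1,2\}$ the intermediate frame would have $d_0 \le 1$; otherwise $2^j m > 2$ and Corollary~\ref{cor:main const} applies as stated, so the generic case is fully covered by the three stages above.)
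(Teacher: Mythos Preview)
Your proposal is correct and matches the paper's intended argument: the paper does not spell out a proof but simply notes that the corollary ``accounts for both Hadamard and ETF doubling,'' which is exactly the three-stage chain you describe (iterate Hadamard doubling $j$ times, apply the main result, then iterate ETF doubling $k$ times). Your remarks on why $|c|\le 1$ persists and on the degenerate small orders are sensible additions that the paper leaves implicit.
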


\begin{example}
For each $d$ below, no skew Hadamard of order~$2d$ is presently known~\cite{D:23}:
\[
d = 214,\, 238,\, 334,\, 358,\, 382.
\]
In each case, we can nevertheless produce an ETF of size 
$d \times 2d$,
as in Proposition~\ref{prop:m/2}.
Indeed, a skew Hadamard of order 
$\tfrac{d}{2}+1$
is known.
By the main result, there is an ETF of size 
$\tfrac{d}{2} \times d$.
Then doubling gives one of size
$d \times 2d$.
For each $d$ above, a $d \times 2d$ ETF is included as an ancillary file with the arXiv version of this paper.
\end{example}

\section*{Acknowledgments}

The authors thank Matt Fickus and Dustin Mixon for insightful comments.

\bigskip

\bibliographystyle{abbrv}
\bibliography{refs}

\end{document}